\documentclass{siamltex}
\usepackage{amsmath,amssymb,amsfonts,latexsym,graphicx,color,multirow,footnote}
\usepackage{color}

\newtheorem{algorithm}{Algorithm}

\title{A randomized FEAST algorithm for  generalized eigenvalue problems }

\author{Guojian Yin\thanks{School of Mathematics, Sun Yat-sen University, Guangzhou, P. R. China ({\tt guojianyin@gmail.com}).}   }
\begin{document}
\maketitle

\begin{abstract} The FEAST algorithm, due to Polizzi, is a typical contour-integral based eigensolver  for computing the eigenvalues, along with their eigenvectors, inside a given region in the complex plane. It was formulated under the circumstance that the considered eigenproblem is Hermitian. The FEAST algorithm is stable and accurate, and has attracted much attention in recent years. However, it was observed that the FEAST algorithm may fail to find the target eigenpairs when applying it to the non-Hermitian problems. Efforts have been made to adapt the FEAST algorithm to non-Hermitian cases. In this work, we develop a new non-Hermitian scheme for the FEAST algorithm. The mathematical framework will be established, and the convergence analysis of our new method will  be studied. Numerical experiments are
reported to demonstrate the effectiveness of our method and to validate the convergence properties.
\end{abstract}

\begin{keywords}
generalized eigenvalue problems, contour integral, spectral projection
\end{keywords}

\begin{AMS}
15A18, 58C40, 65F15
\end{AMS}

\section{Introduction}
Large-scale non-Hermitian eigenvalue problems arise in various areas of
science and engineering, such as  dynamic analysis of structures \cite{GLS94}, linear stability analysis of the Navier-Stokes equation in fluid dynamics \cite{CST00}, the electron energy and position problems in quantum chemistry \cite{FH74}, and resonant state calculation \cite{Reinhardt82}. In most practical  applications, it is not the whole spectrum but rather a significant part of it is of interest to the users \cite{NH13}. For example, 
in the model reduction of a linear dynamical system, one only needs to know
the response over a range of frequencies, see \cite{BDDRV00, GGD96}.

Consider the generalized eigenvalue problem
\begin{equation}\label{eq:1-1}
 A {\bf x} = \lambda  B {\bf x},
\end{equation}
where  $ A,  B \in {\mathbb C}^{n \times n}$.  The scalars $\lambda \in \mathbb{C}$
and the associated vectors ${\bf x} \in {\mathbb C}^n, {\bf x}\neq 0$, are
called the eigenvalues and their associated (right) eigenvectors, respectively \cite{BDDRV00, demmel, gvl}.  When $B$ is the identity matrix, then (\ref{eq:1-1}) becomes a standard eigenvalue problem. 
In this work, our objective is to compute the eigenvalues of (\ref{eq:1-1}) inside a given region in the complex plane, along with the corresponding eigenvectors.

Computing the partial spectrum of a large-scale problem is very difficult in practice. Maybe the most straightforward method is first using  the well-known QZ method \cite{ MS73} to compute the whole spectrum and then selecting the target eigenvalues. This direct method costs about $\mathcal{O}(n^3)$ \cite{gvl}, consequently, it is prohibitively expensive when the size of considered problem is large. In the past decades, the most successful methods for solving the partial spectrum of a large eigenproblem are based on the projection techniques \cite{BDDRV00, BDG97, stewart}, of which perhaps the Krylov subspace approaches are the most wildly used \cite{Ruhe98, saad}. However, the existing projection methods mainly focus on computing the extreme eigenvalues \cite{STL11} or the eigenvalues close to a given shift \cite{GLS94}. 

Recently, a class of eigensolvers based on contour integrals were proposed for computing the eigenvalues inside a given region in the complex plane \cite{AT14, Beyn12, DPS15, polizzi, SFT, ss, ST07}. Unlike the well-known Krylov subspace methods, these new methods use specifically defined contour integrals to generate subspaces to contain the eigenspace corresponding to the target eigenvalues. Then  the projection techniques are used to extract the target eigenpairs. Two typical examples of these contour-integral based eigensolvers are the Sakurai-Sugiura (SS) method \cite{ss} and the FEAST algorithm developed by Polizzi in \cite{polizzi}. By noticing that the SS method always suffers from numerical instability \cite{AT14, ISN10}, Sakurai {\it et al.}  turned to use the Rayleigh-Ritz procedure to extract the target eigenpairs, and leaded to a more stable contour-integral based eigensolver, called CIRR \cite{IS10, ST07}.

The derivation of both CIRR and FEAST is under the assumptions that $A$ and $B$ are Hermitian matrices and $B$ is positive definite, i.e., (\ref{eq:1-1}) is a Hermitian problem. 
It was shown in \cite{YCY14} that CIRR and FEAST may fail to find the target eigenpairs when (\ref{eq:1-1}) is a non-Hermitian problem. Motivated by this observation, the authors in \cite{YCY14} developed a non-Hermitian FEAST algorithm to make the FEAST algorithm also applicable for the non-Hermitian problems. Instead of the orthogonal projection technique used in the FEAST algorithm, the non-Hermitian FEAST algorithm proposed in \cite{YCY14} uses to the oblique projection technique with appropriately chosen left subspace to extract desired eigenpairs.

In this work, we would like to formulate another non-Hermitian scheme for the FEAST algorithm. We find that the FEAST algorithm can deal with the non-Hermitian problems if the left subspace spanned by a random matrix. The theoretical analysis will be given to justify our findings. The convergence properties also will be studied to show the effectiveness of our method.

The paper is organised as follows. In Section 2, 
we briefly describe  the FEAST algorithm \cite{polizzi}. In Section 3, we review the non-Hermitian variant of the FEAST algorithm proposed in \cite{YCY14}. We formulate our new non-Hermitian FEAST algorithm and give convergence analysis in Section 4.
In Section 5, numerical experiments are reported to illustrate the numerical performance of our method.

Throughout the paper, we use the following notation and terminology.
The subspace spanned by the columns of a matrix $ X$ is
denoted by ${\rm span}\{ X\}$. The rank and conjugate transpose of $X$ are
denoted by $\rank(X)$ and $X^*$ respectively.  The algorithms are presented in \textsc{Matlab} style.

\section{Introduction to FEAST}
In this section, we provide a brief review of the FEAST algorithm
{\cite{polizzi}}. The algorithm was formulated under the assumptions that $A$ and $B$ are Hermitian and
$B$ is positive definite, in which case the eigenvalues of (\ref{eq:1-1}) are real-valued \cite{demmel}.
The FEAST algorithm was developed for finding all eigenvalues of
(\ref{eq:1-1}) within a specified interval, say $[\sigma_1,\sigma_2]$,
and their associated eigenvectors. Without loss of generality, assume that the eigenvalues inside $[\sigma_1,\sigma_2]$ are  $\lambda_1\le \lambda_2\le\ldots \le\lambda_s$. Therefore, there are $s$ eigenvalues inside $[\sigma_1,\sigma_2]$.

Essentially, the FEAST algorithm
belongs to the family of subspace iteration with orthogonal projection \cite{TP13}.
Unlike the better known Krylov subspace methods, the FEAST
algorithm constructs a  subspace that envelops the desired
eigenspace via the contour integral defined as
\begin{equation}\label{eq:2-4}
 V:= \dfrac{1}{2\pi \sqrt{-1}}\oint_{\Gamma}  (z B- A)^{-1}  dz Y,
\end{equation}
where $\Gamma$ is any contour that contains $\{\lambda_i\}_{i=1}^s$ inside, and $Y$ is an $n\times s$ random matrix.   To formulate the FEAST algorithm, we need the following theorem.
\begin{theorem}[\cite{stewart}] \label{thm:2.2}
Let $ A$ and $ B$ be  $n\times n$ Hermitian matrices and that $ B$ is positive definite.
Then there exists an $n\times n$ matrix $ X=[{\bf x}_1, {\bf x}_2,\ldots, {\bf x}_n]$ for which
\begin{equation}\label{eq:2-1}
 X^* B X=  I_n \quad \mbox{and} \quad  X^* A X= \Lambda ={\rm diag}(\lambda_1,\lambda_2
\cdots, \lambda_n),
\end{equation}
where  $I_n$ is the $n \times n$ identity
matrix, $\{\lambda_i\}_{i=1}^n$
are the eigenvalues of the matrix pencil $zB-A$, and the
columns $\{{\bf x}_i\}_{i=1}^n$ of $ X$ are their associated eigenvectors.
\end{theorem}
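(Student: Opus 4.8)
The plan is to exhibit the matrix $X$ explicitly via the spectral decomposition of a suitably transformed Hermitian matrix. First I would use the positive definiteness of $B$ to factor $B = L L^*$ (e.g.\ via the Cholesky factorization, or equivalently via the positive-definite square root $B^{1/2}$). This is legitimate precisely because $B$ is Hermitian positive definite. Setting $\widehat{A} := L^{-1} A L^{-*}$, one checks that $\widehat{A}$ is Hermitian: indeed $\widehat{A}^* = (L^{-1} A L^{-*})^* = L^{-1} A^* L^{-*} = L^{-1} A L^{-*} = \widehat{A}$, using $A^* = A$.

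Next, since $\widehat{A}$ is Hermitian, the spectral theorem supplies a unitary matrix $U = [{\bf u}_1,\dots,{\bf u}_n]$ and real scalars $\lambda_1,\dots,\lambda_n$ with $U^* \widehat{A} U = \Lambda = {\rm diag}(\lambda_1,\dots,\lambda_n)$ and $U^* U = I_n$. I would then define
\begin{equation}\label{eq:proofX}
 X := L^{-*} U = [{\bf x}_1,\dots,{\bf x}_n], \qquad {\bf x}_i := L^{-*}{\bf u}_i.
\end{equation}
A direct computation gives $X^* B X = U^* L^{-1} (L L^*) L^{-*} U = U^* U = I_n$ and $X^* A X = U^* L^{-1} A L^{-*} U = U^* \widehat{A} U = \Lambda$, which is exactly \eqref{eq:2-1}.

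It remains to identify the $\lambda_i$ and ${\bf x}_i$ with the eigenvalues and eigenvectors of the pencil $zB - A$. From $X^*AX = \Lambda$ and $X^*BX = I_n$, multiplying the eigen-relation $\widehat{A}{\bf u}_i = \lambda_i {\bf u}_i$ through by $L^{-*}$ and substituting ${\bf u}_i = L^* {\bf x}_i$ yields $A {\bf x}_i = \lambda_i B {\bf x}_i$; since $X$ is invertible (being a product of invertible matrices), the ${\bf x}_i$ are linearly independent and there are exactly $n$ of them, so $\{\lambda_i\}$ is precisely the spectrum of the pencil (counted with multiplicity) — this follows because $\det(zB - A) = \det(B)\prod_i (z - \lambda_i)$, obtained by conjugating $zB - A$ by $X$.

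I expect no serious obstacle here; the only point requiring a little care is the reduction to the standard Hermitian eigenvalue problem, i.e.\ making sure the congruence transformation by $L^{-*}$ simultaneously diagonalizes $A$ and brings $B$ to the identity, and that invertibility of $X$ is used to conclude all $n$ eigenpairs of the pencil are captured. Everything else is the spectral theorem plus bookkeeping.
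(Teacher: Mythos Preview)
Your proof is correct and follows the standard reduction-to-a-standard-Hermitian-problem argument (Cholesky/square-root factorization of $B$, spectral theorem for $L^{-1}AL^{-*}$, then back-transform). Note, however, that the paper does not supply its own proof of this theorem: it is stated as a known result with a citation to Stewart's textbook, so there is no in-paper argument to compare against---your approach is essentially the textbook proof one finds in the cited reference.
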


By (\ref{eq:2-1}) and the residue theorem in complex analysis \cite{Rudin}, we have
\begin{equation}\label{eq:2-5}
  V=  \dfrac{1}{2\pi \sqrt{-1}} X\oint_{\Gamma}(z I_n-\Lambda)^{-1}dzY
 (BX)^{-1}=  X_{(:,1:s)}( X_{(:,1:s)})^{*}Y.
\end{equation}
 Then the columns of $ V$ form a basis for the eigenspace ${\rm span}\{X_{(:,1:s)}\}$, if $( X_{(:,1:s)})^{*}Y$ is  full-rank. Forming the $s\times s$ matrices $\hat{A}= V^* A  V$ and $\hat{B}=  V^* B  V$, solving
the problem (\ref{eq:1-1}) now is reduced to computing the eigenpairs of the projected eigenvalue problem 
\begin{equation}\label{eq:11-3-1}
\hat{A} {\bf y} = \lambda \hat{B} {\bf y},
\end{equation}
according to the Rayleigh-Ritz procedure \cite{polizzi, stewart}.

To generate the projected eigenproblem (\ref{eq:11-3-1}), the most important task is to compute the basis vectors $V$.  In view of  (\ref{eq:2-4}) and (\ref{eq:2-5}), we know that $V$ has to be computed numerically by a quadrature scheme. Let $\Gamma$ be the circle with center at $\gamma = (\sigma_1+\sigma_2)/2$
and radius $\rho = (\sigma_2-\sigma_1)/2$, applying the $q$-point Gauss-Legendre quadrature \cite{DR84} to compute $V$ numerically yields
\begin{equation}\label{eq:8-4}
  V = \dfrac{1}{2\pi \sqrt{-1}}\oint_{\Gamma}  (z B- A)^{-1}dz Y
 \approx \frac{1}{2} \sum^{q}_{j=1}\omega_j(z_j-\gamma)(z_j  B- A)^{-1}  Y,
\end{equation}
where $z_j=\gamma+\rho e^{\sqrt{-1}\theta_j}$, $\theta_j=(1+ t_j)\pi$, and
$t_j$ is the $j$th Gaussian node with associated weight $\omega_j$. From (\ref{eq:8-4}), one can see that the dominant computational work of the FEAST algorithm is solving the linear systems of the form
\begin{equation}\label{eq:11-4-1}
(z_jB-A)X_j = Y,\quad  j = 1,\ldots, q.
\end{equation}

The complete FEAST algorithm is given as follows.

\begin{algorithm}\label{alg:8}
Input Hermitian matrices $A$ and $B$ with $B$ being positive definite,
a uniformly-distributed random matrix $Y \in \mathbb{R}^{n\times t}$,
where $t\geq s$,
the circle $\Gamma$ enclosing the interval $[\sigma_1,\sigma_2]$, and
a convergence  tolerance $\epsilon$.
The function ``{\sc Feast}" computes eigenpairs $(\hat{\lambda}_i, \hat{\bf x}_i)$ of
(\ref{eq:1-1}) that satisfy
\begin{equation}\label{con-cre}
\hat{\lambda}_i \in [\sigma_1,\sigma_2]
\quad {\rm and} \quad \sum_{i = 1}^s \hat{\lambda}_i< \epsilon,
\end{equation}
and they are output in the vector $\Lambda_s$ and the matrix $X_s$.
\end{algorithm}
\vspace{.2cm}
%\vspace{-5pt}
\begin{tabbing}
x\=xxx\= xxx\=xxx\=xxx\=xxx\=xxx\kill
\> Function $[ \Lambda_s,  X_s]$ = {\sc Feast}$( A,  B,  Y, \Gamma, \epsilon)$\\
\>1.\> Compute $ V $ approximately by (\ref{eq:8-4}).\\
\>2.\> Set $\hat{A}=  V^* A  V$ and $\hat{B}=  V^* B  V$.\\
\>3.\> Solve the generalized eigenproblem of size $t$: $\hat{A} {{\bf y}}=\hat{\lambda}  \hat{B} {{\bf y}}$,
to obtain the \\
\>\>  eigenpairs $\{(\hat{\lambda}_i, {\bf y}_i)\}_{i = 1}^{t}$.\\
\>4.\> Compute $\hat{\bf {x}}_i=   V{{\bf y}}_i, i =1,2,\ldots t$.\\
\>5.\> Check if $\{(\hat{\lambda}_i, \hat{\bf x}_i)\}_{i=1}^t$ satisfy the convergence criteria (\ref{con-cre}). If $s$ eigenpairs \\
\>\> satisfy (\ref{con-cre}), stop. Otherwise, set $X_t=  [\hat{\bf x}_1,\hat{\bf x}_2,\ldots,\hat{\bf x}_{t}]$ and $Y=BX_t$,  then\\
\>\> go back to Step 1.
\end{tabbing}
\vspace{.2cm}

The FEAST algorithm is an accurate and reliable technique \cite{kramer, TP13}. It transforms the difficulty of  solving the eigenproblem (\ref{eq:1-1}) to that of  solving linear systems (\ref{eq:11-4-1}). Since the quadrature nodes $z_j$ and the columns of the right-hand sides in (\ref{eq:11-4-1}) are independent, the FEAST algorithm can be easily implemented on parallel machines. Due to these appealing features, the FEAST algorithm attracts much attention recently.

\section{A non-Hermitian FEAST algorithm}
The FEAST algorithm was formulated when (\ref{eq:1-1}) is a Hermitian problem. However, when it comes to the non-Hermitian problem, it was found  in \cite{YCY14}  that the FEAST algorithm may fail to compute the desired eigenpairs; a simple example was given  to illustrate this fact. Motivated by this observation,  the authors in \cite{YCY14} developed a non-Hermitian FEAST algorithm so as to adapt FEAST to the non-Hermitian cases. The key to the success of their non-Hermitian FEAST  algorithm is that  the oblique projection technique, instead of the orthogonal projection technique used in FEAST, with appropriately chosen left subspace is used to extract the desired eigenpairs.

The only requirement for the non-Hermitian FEAST algorithm proposed in \cite{YCY14} is that the matrix pencil $z B-A$ is regular, which means the method is able to deal with the most common generalized eigenproblems \cite{BDDRV00}.  Recall that a matrix pencil $z B-A$ is regular if ${\rm det}(zB-A)$ is not identically zero for all $z \in \mathbb{C}$. As with the Jordan canonical form for a matrix, there exists a canonical form for the regular matrix pencil $zB-A$.

\begin{theorem} [The Weierstrass canonical form \cite{G59, YCY14}] \label{thm2.1}
Let $zB-A$ be a regular matrix pencil of order $n$. Then there exist nonsingular matrices $S$ and $T \in \mathbb{C}^{n\times n}$ such that
\begin{equation}\label{eq:8-3-1}
TAS = \begin{bmatrix}
  J_d    & 0   \\
   0   & I_{n-d}
\end{bmatrix}  \quad {\rm and} \quad TBS= \begin{bmatrix}
   I_d   & 0   \\
  0    & N_{n-d}
\end{bmatrix},
\end{equation}
where $J_d$ is a $d\times d$ matrix in Jordan canonical form
with its diagonal entries corresponding to the eigenvalues of $zB-A$, $N_{n-d}$ is an $(n-d)\times (n-d)$ nilpotent matrix also in Jordan canonical form, and $I_d$ denotes the identity matrix of order $d$.
\end{theorem}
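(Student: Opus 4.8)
The plan is to prove the Weierstrass canonical form by reducing it to the Kronecker–Weierstrass theory via a careful choice of a scalar shift. Since $zB-A$ is regular, $\det(zB-A)$ is a polynomial in $z$ that does not vanish identically, so there exists $\mu \in \mathbb{C}$ with $\det(\mu B - A) \neq 0$; equivalently $\mu B - A$ is nonsingular. The first step is to work with the single matrix $M := (\mu B - A)^{-1} B$, which is well-defined and $n \times n$.

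The second step is to split $\mathbb{C}^n$ according to the spectral structure of $M$. Let $P$ be the spectral projector onto the generalized eigenspace of $M$ associated with all \emph{nonzero} eigenvalues, and $I-P$ the projector onto the generalized eigenspace associated with the eigenvalue $0$ (equivalently $\ker M^n$). This gives an $M$-invariant decomposition $\mathbb{C}^n = \mathcal{R} \oplus \mathcal{N}$, where on $\mathcal{R}$ the restriction $M|_{\mathcal{R}}$ is invertible and on $\mathcal{N}$ the restriction $M|_{\mathcal{N}}$ is nilpotent. The key algebraic observation is that the eigenvalues $\lambda$ of the pencil $zB-A$ correspond to the eigenvalues $\nu$ of $M$ via $\nu = 1/(\mu - \lambda)$, so the finite eigenvalues of the pencil live on $\mathcal{R}$ and the infinite-eigenvalue (nilpotent) part lives on $\mathcal{N}$; set $d := \dim \mathcal{R}$.

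The third step is to build $S$ and $T$ explicitly. Choose a basis of $\mathbb{C}^n$ adapted to $\mathcal{R}\oplus\mathcal{N}$ and collect it in a nonsingular matrix $S$; one checks that $S^{-1}MS$ is block diagonal with blocks $M_{\mathcal{R}}$ (invertible) and $M_{\mathcal{N}}$ (nilpotent). Inside $\mathcal{R}$, further transform so that $M_{\mathcal{R}}^{-1} - \mu I$ is in Jordan form, and call that Jordan matrix $J_d$ — its eigenvalues are exactly the finite eigenvalues of the pencil. Inside $\mathcal{N}$, put $M_{\mathcal{N}}$ itself into Jordan form of a nilpotent matrix, call it $N_{n-d}$. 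Then define $T := \begin{bmatrix} J_d - \mu I_d & 0 \\ 0 & I_{n-d} - \mu N_{n-d}\end{bmatrix}^{-1}\! \big(\text{block form of } (\mu B-A)^{-1}\big)$ after conjugation by $S$ — more cleanly, one verifies directly that with these choices $TBS$ and $TAS$ take the stated block forms, using $B = (\mu B - A)M$ and $A = \mu B - (\mu B - A) = (\mu B - A)(\mu M - I)$. The well-definedness of $T$ hinges on $I_{n-d} - \mu N_{n-d}$ and $J_d - \mu I_d$ being invertible: the former holds because $N_{n-d}$ is nilpotent, and the latter can be arranged since $\mu$ is not a finite eigenvalue of the pencil, so $\mu$ is not an eigenvalue of $J_d$.

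The main obstacle I anticipate is bookkeeping rather than depth: one must verify that the shift $\mu$ can be chosen to avoid \emph{all} finite eigenvalues simultaneously (immediate from regularity, since there are at most $n$ of them) and then track how the two block transformations interact so that a single pair $(S,T)$ realizes both identities in \eqref{eq:8-3-1} at once. An alternative, cleaner route is simply to cite the Kronecker–Weierstrass theorem for regular pencils \cite{G59} and merely indicate why regularity suffices — indeed the statement already attributes the result to \cite{G59, YCY14}, so in the paper this theorem should be stated without an independent proof, with the reduction via the shifted matrix $M$ sketched only if a self-contained argument is desired.
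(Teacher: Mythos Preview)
Your anticipation in the final paragraph is exactly right: the paper does not prove this theorem at all. It is stated as a known result, attributed to \cite{G59, YCY14}, and the paper immediately proceeds to use it. So there is no ``paper's own proof'' to compare against; the intended treatment is precisely the citation you suggest.

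For what it is worth, your shifted-matrix route via $M=(\mu B-A)^{-1}B$ is a standard and essentially correct way to derive the Weierstrass form. One small bookkeeping point: on the nilpotent block you cannot simply put $M|_{\mathcal N}$ into Jordan form and read off $N_{n-d}$ directly, because the matrix that must end up in Jordan form is $(\mu M|_{\mathcal N}-I)^{-1}M|_{\mathcal N}$, not $M|_{\mathcal N}$ itself. Choosing the basis on $\mathcal N$ to achieve that (which is possible since this matrix is nilpotent) fixes the construction. With that adjustment your sketch goes through, but, as you note, none of it is required here.
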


Let $J_d$ be of the form
\begin{equation}\label{equ:7-17-1}
J_d =   \left[ \begin{array}{cccc}
 J_{d_1}(\lambda_1) & 0 & \cdots & 0\\
0 &  J_{d_2}(\lambda_2) & \cdots & 0\\
\vdots & \vdots& \ddots &  \vdots\\
0 & 0 & \cdots &  J_{d_m}(\lambda_m)
\end{array}
\right]
\end{equation}
where $\sum_{i=1}^m d_i  =d$ and
$ J_{d_i}(\lambda_i)$ are $d_i \times d_i$ matrices of the form
$$\begin{array}{ccc}
 J_{d_i}(\lambda_i) = \left[ \begin{array}{ccccc}
\lambda_i & 1 & 0& \cdots  & 0\\
0 & \lambda_i & 1 & & \vdots\\
 & \ddots & \ddots & \ddots &0\\
\vdots & & \ddots &\ddots &  1\\
0 &  \cdots && 0&\lambda_i
\end{array}
\right], & & i = 1, 2, \ldots, m
\end{array}
$$
with $\lambda_i$ being the eigenvalues. Here the $\lambda_i$
are not necessarily distinct and can be repeated according to
their multiplicities.

Let $N_{n-d}$ be of the form
\begin{equation*}\label{equ:7-17-11}
N_{n-d} =   \left[ \begin{array}{cccc}
N_{d^{\prime}_1} & 0 & \cdots & 0\\
0 &  N_{d^{\prime}_2} & \cdots & 0\\
\vdots & \vdots& \ddots &  \vdots\\
0 & 0 & \cdots &  N_{d^{\prime}_{m^{\prime}}}
\end{array}
\right],
\end{equation*}
where $\sum_{i=1}^{m^{\prime}} d^{\prime}_i  =n-d$ and
$N_{d^{\prime}_i}$ are $d^{\prime}_i \times d^{\prime}_i$ matrices of the form
$$\begin{array}{ccc}
N_{d^{\prime}_i} = \left[ \begin{array}{ccccc}
0 & 1 & 0& \cdots  & 0\\
0 &0 & 1 & & \vdots\\
 & \ddots & \ddots & \ddots &0\\
\vdots & & \ddots &\ddots &  1\\
0 &  \cdots && 0&0
\end{array}
\right], & & i = 1, 2, \ldots, m^{\prime}.
\end{array}
$$

Partition $S$ into block form
$S = [S_1, S_2, \ldots, S_ m, S_{m+1}]$,
where each $S_i\in \mathbb{C}^{n\times d_i}$, $1\le i\le  m$,  and $S_i$ into $S_i=[{\bf s}_1^i, {\bf s}_2^i, \ldots ,{\bf s}_{d_i}^i]$ with
${\bf s}_j^i \in {\mathbb C}^{n}$, $1\leqslant j \leqslant d_i$.
It was verified in \cite{YCY14} that for any eigenvalue $\lambda_i$,
$1\le i \le m$,
\begin{equation}\label{eq:11-8-2}
(\lambda_i B-A)S \begin{bmatrix}
  I_d &   0 \\
    0  &  N_{n-d}
\end{bmatrix} =  BS\begin{bmatrix}
  \lambda_i I_d-J_d   &   0 \\
    0  &  \lambda_iN_{n-d}-I_{n-d}
\end{bmatrix}.
\end{equation}
By comparing the first $d$ columns on both sides above, we get
\begin{equation}\label{Jordan}
(\lambda_i B-A){\bf s}_j^{i} = B{\bf s}_{j-1}^{i}, \quad 1\le j \le d_i,
\quad 1 \le i \le m,
\end{equation}
with ${\bf s}_0^{i} \equiv {\bf 0}$.
We can see that ${\bf s}_1^{i}$ are the eigenvectors corresponding
to the eigenvalues $\lambda_i$ for all $1 \le i \le m$.

Let $\Gamma$ be a  positively oriented simple closed curve enclosing the desired eigenvalues. Again without loss of generality, we let the eigenvalues of (\ref{eq:1-1}) enclosed by $\Gamma$ be $\{\lambda_1, \ldots, \lambda_l\}$, and $s: = d_1+d_2+\cdots + d_{l}$ be the number of eigenvalues inside $\Gamma$ with multiplicity taken into account. 
Define the contour integral
\begin{equation} \label{eq:2-2-21}
  Q:= \dfrac{1}{2\pi \sqrt{-1}}\oint_{\Gamma} (z B- A)^{-1} Bdz.
\end{equation}
According to the residue theorem in complex analysis \cite{Rudin}, it was verified in \cite{YCY14} that
\begin{equation}\label{equ:7-26-4}
 Q = \frac{1}{2\pi \sqrt{-1}} \oint_\Gamma  (zB-A)^{-1}B  dz =  S\left[ \begin{array}{cc}
  I_s & 0\\
0 & 0
 \end{array} \right]  S^{-1} =  S_{(:,1:s)} ( S^{-1})_{(1:s,:)}.
\end{equation}
One can show that $Q^2 = Q$, which means $Q$ is a projector onto subspace $\mathcal{K} = {\rm span}\{S_{(:,1:s)}\}$. Define 
\begin{equation}\label{eq:11-5-2}
U:= QY = S_{(:,1:s)} ( S^{-1})_{(1:s,:)}Y,
\end{equation}
where $Y$ is an $n\times s$ random matrix. Therefore, $U$ is the projection of $Y$ onto the subspace $\mathcal{K}$.  Now we would like to show that the columns of $U$ form a basis for the subspace $\mathcal{K}$. We begin with
\begin{lemma}[\cite{YCY14}]\label{lem:8-8-1}
Let $Y\in\mathbb{R}^{n\times s}$. If the entries of
$Y$ are random numbers from a continuous distribution and that they are independent and identically distributed (i.i.d.), then with probability 1, the matrix
$( S^{-1})_{(1:s,:)}Y$ is nonsingular.
\end{lemma}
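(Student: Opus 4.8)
The plan is to observe that $(S^{-1})_{(1:s,:)}Y$ is the product of a \emph{fixed} matrix of full row rank with a random matrix, and then to invoke the standard principle that a polynomial which is not identically zero vanishes only on a set of measure zero, hence with probability zero under an (absolutely) continuous law. So I would first isolate the deterministic object $M := (S^{-1})_{(1:s,:)} \in \mathbb{C}^{s\times n}$, the submatrix formed by the first $s$ rows of $S^{-1}$, and note that, since $S^{-1}$ is nonsingular, its rows are linearly independent; in particular $\rank(M)=s$, so there is an index set $J=\{j_1<\cdots<j_s\}\subseteq\{1,\dots,n\}$ for which the $s\times s$ submatrix $M_{(:,J)}$ is nonsingular.

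Next I would view $f(Y):=\det(MY)$ as a function of the $ns$ entries of $Y$. Each entry of $MY$ is a linear form in those entries, and the determinant is a polynomial in the entries of $MY$, so $f$ is a polynomial in the entries of $Y$. To see that $f$ is not the zero polynomial, take $Y_0$ to be the matrix whose $k$-th column equals the standard basis vector ${\bf e}_{j_k}$; then $MY_0 = M_{(:,J)}$, whence $f(Y_0)=\det(M_{(:,J)})\neq 0$. Thus $f\not\equiv 0$.

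Finally, since a nonzero polynomial on $\mathbb{R}^{ns}$ vanishes on a Lebesgue-null set (a classical fact, provable by induction on the number of variables using that a nonzero univariate polynomial has finitely many roots), and since the entries of $Y$ are i.i.d.\ draws from a continuous distribution, the law of $Y$ as a point of $\mathbb{R}^{ns}$ is absolutely continuous with respect to Lebesgue measure and therefore assigns probability zero to $\{Y:f(Y)=0\}$. Hence $\det\big((S^{-1})_{(1:s,:)}Y\big)\neq 0$, i.e.\ $(S^{-1})_{(1:s,:)}Y$ is nonsingular, with probability $1$. The only nonroutine step here is establishing $f\not\equiv 0$, which rests entirely on $M$ having full row rank $s$; once the witnessing matrix $Y_0$ is exhibited, the measure-theoretic conclusion is immediate. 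A minor point worth flagging is the exact reading of ``continuous distribution'': what the argument actually uses is that the distribution charges no Lebesgue-null set, which holds whenever it possesses a density.
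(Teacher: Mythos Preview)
Your argument is correct and is the standard one for results of this type. Note that the paper does not actually supply a proof of this lemma; it is quoted from \cite{YCY14}, so there is no in-paper proof to compare against. One small technical point worth making explicit: since $S\in\mathbb{C}^{n\times n}$, the matrix $M=(S^{-1})_{(1:s,:)}$ may be complex, so $f(Y)=\det(MY)$ is a complex-valued polynomial in the real entries of $Y$; the zero set $\{f=0\}$ is then the intersection of the zero sets of $\mathrm{Re}\,f$ and $\mathrm{Im}\,f$, and your witness $Y_0$ (which is real) shows that at least one of these two real polynomials is not identically zero, which is all that is needed for the measure-zero conclusion.
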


According to (\ref{eq:11-5-2}) and {Lemma} \ref{lem:8-8-1}, we can conclude that the columns of $U$  form a basis for the subspace $\mathcal{K}$. Note that $\mathcal{K}$ contains the eigenspace corresponding to the desired eigenvalues (see (\ref{Jordan}) for details), it is natural to take $\mathcal{K}$ as the right subspace. The FEAST algorithm takes advantage of the often used orthogonal projection technique to extract desired eigenpairs.  The authors in \cite{YCY14} found that this extraction approach may fail to compute the desired eigenpairs  for the FEAST algorithm when (\ref{eq:1-1}) is a non-Hermitian problem. To address this deficiency, they resorted to the oblique projection method, and developed a non-Hermitian FEAST algorithm. In their method, the left subspace is taken as $B\mathcal{K}$;  the approximate eigenpairs $(\bar{\lambda}, \bar{{\bf x}})$ are obtained by imposing the Petrov-Galerkin condition \cite{BDDRV00, saad}:
\begin{equation}\label{eq:11-5-3}
(A\bar{{\bf x}}-\bar{\lambda}B\bar{{\bf x}})\  \bot \ B\mathcal{K},
\end{equation}
where $\bar{\lambda}\in \mathbb{C}$ and $\bar{{\bf x}}\in \mathcal{K}$. It was shown in \cite{YCY14} that the columns of $BU$ form a basis for $B\mathcal{K}$. Therefore (\ref{eq:11-5-3}) can be written in matrix form
\begin{equation}\label{eq:6-3-12}
(BU)^*(AU{\bf y}-\bar{{\lambda}}BU{\bf y})=0,
\end{equation}
where ${\bf y} \in \mathbb{C}^s$ satisfying $\bar{{\bf x}}=U{\bf y}$.
Accordingly, solving the eigenvalues of (\ref{eq:1-1}) inside $\Gamma$ now is reduced to solve the projected eigenproblem
\begin{equation}\label{eq:6-3-13}
\overline{ A}{\bf y}=\bar{\lambda}\overline{ B}{\bf y},
\end{equation}
with 
\begin{equation}\label{equ:7-10}
\overline{ A}=  (BU)^* A U\quad \mbox{and}\quad \overline{ B}=  (BU)^* B U.
\end{equation}
The key to the success of the non-Hermitian FEAST algorithm proposed in \cite{YCY14} is that the left subspace is taken as $B\mathcal{K}$, instead of $\mathcal{K}$ used in the FEAST algorithm. Due to this, below we call this non-Hermitian FEAST algorithm BFEAST for the ease of reference. The following theorem justifies their choice of the left subspace.

%\vspace{2mm}
\begin{theorem}\label{Th:8-3-3}
Let $\{(\bar{\lambda}_i, {\bf y}_i)\}_{i=1}^s$ be the eigenpairs of
the projected eigenproblem (\ref{eq:6-3-13}). Then
$\{(\bar{\lambda}_i, U{\bf y}_i)\}_{i=1}^s$ are the eigenpairs of (\ref{eq:1-1})  located inside ${\Gamma}$.
\end{theorem}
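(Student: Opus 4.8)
The plan is to show that the oblique projection turns the pencil $(A,B)$, restricted to the spectral subspace $\mathcal{K} = {\rm span}\{U\}$ of the eigenvalues inside $\Gamma$, into the $s\times s$ pencil $(\overline A,\overline B)$ in a way that preserves eigenvalues and sends an eigenvector ${\bf y}$ of (\ref{eq:6-3-13}) to the eigenvector $U{\bf y}$ of (\ref{eq:1-1}). Concretely I will exhibit an $s\times s$ matrix $M$ satisfying $AU = BU\,M$ and $\overline A = \overline B\,M$, prove that $\overline B$ is nonsingular, and then read off the spectrum of $M$.

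First I would record two consequences of the Weierstrass canonical form (Theorem \ref{thm2.1}). Set $S_s := S_{(:,1:s)}$ and let $\hat J_s := {\rm diag}(J_{d_1}(\lambda_1),\ldots,J_{d_l}(\lambda_l))$ be the leading $s\times s$ block of $J_d$; note $s = d_1+\cdots+d_l \le d$, since every eigenvalue inside a bounded contour is finite. From (\ref{eq:8-3-1}) we have $AS = T^{-1}\,{\rm diag}(J_d,I_{n-d})$ and $BS = T^{-1}\,{\rm diag}(I_d,N_{n-d})$, and comparing the first $s$ columns on both sides --- this is where the block-diagonal structure of $J_d$ enters --- gives
\begin{equation*}
AS_s = BS_s\,\hat J_s, \qquad \text{with } BS_s \text{ of full column rank } s .
\end{equation*}

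Next I would invoke Lemma \ref{lem:8-8-1}: by (\ref{eq:11-5-2}), $U = S_s\,G$ with $G := (S^{-1})_{(1:s,:)}Y$ nonsingular (with probability $1$). Hence
\begin{equation*}
AU = AS_s\,G = BS_s\,\hat J_s\,G = BU\,M, \qquad M := G^{-1}\hat J_s\,G ,
\end{equation*}
so $M$ is similar to $\hat J_s$, and $BU = BS_s\,G$ has full column rank $s$. Substituting $AU = BU\,M$ into (\ref{equ:7-10}) yields $\overline A = (BU)^* A U = (BU)^*(BU)\,M = \overline B\,M$; and since $BU$ has full column rank, $\overline B = (BU)^*(BU)$ is Hermitian positive definite, in particular nonsingular. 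Consequently $(\overline A,\overline B)$ is regular, the problem $\overline A{\bf y} = \bar\lambda\overline B{\bf y}$ is equivalent to $M{\bf y} = \bar\lambda{\bf y}$, and the eigenvalues of (\ref{eq:6-3-13}) are exactly those of $M$, hence of $\hat J_s$, namely $\lambda_1,\ldots,\lambda_l$ with multiplicities $d_1,\ldots,d_l$ --- precisely the $s$ eigenvalues of (\ref{eq:1-1}) inside $\Gamma$.

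Finally, for an eigenpair $(\bar\lambda_i,{\bf y}_i)$ of (\ref{eq:6-3-13}), i.e.\ $M{\bf y}_i = \bar\lambda_i{\bf y}_i$, we have $A(U{\bf y}_i) = (AU){\bf y}_i = BU\,M{\bf y}_i = \bar\lambda_i\,B(U{\bf y}_i)$, while $U{\bf y}_i \neq {\bf 0}$ because $U$ has full column rank and ${\bf y}_i \neq {\bf 0}$; thus $(\bar\lambda_i, U{\bf y}_i)$ is an eigenpair of (\ref{eq:1-1}) located inside $\Gamma$, and letting $i$ range over $1,\ldots,s$ exhausts all eigenvalues inside $\Gamma$, counted with multiplicity. (The same identity $AU = BU\,M$ transports generalized eigenvectors, so the Jordan structure inside $\Gamma$ is recovered as well.) I expect the only delicate point to be the column-counting in the Weierstrass step, together with checking that nonsingularity of $G$ forces $U$ and $BU$ to have full column rank: it is exactly the resulting invertibility of $\overline B$ that guarantees $(\overline A,\overline B)$ is regular and pins down the number of eigenvalues inside $\Gamma$ at $s$; everything else is a one-line substitution.
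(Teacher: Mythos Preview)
Your proof is correct. Note, however, that the paper does \emph{not} actually prove Theorem~\ref{Th:8-3-3}: it is stated there as a result imported from~\cite{YCY14}, with no argument given. The closest thing the paper contains is the proof of the analogous Theorem~\ref{Th:10-2-1} (the RFEAST version, with a random $R$ in place of $BU$), and the two approaches differ in an interesting way.

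The paper's argument for Theorem~\ref{Th:10-2-1} works with the pencil directly: it factors
\[
\tilde\lambda\widetilde B-\widetilde A
= \big(R^*(T^{-1})_{(:,1:s)}\big)\,\big(\tilde\lambda I_s - J_{(1:s,1:s)}\big)\,\big((S^{-1})_{(1:s,:)}Y\big),
\]
takes determinants to identify the eigenvalues, and then analyzes the null space of $\tilde\lambda I_s - J_{(1:s,1:s)}$ case by case (illustrated for $\lambda_1=\lambda_2$) to recover the eigenvectors. Regularity of the reduced pencil follows from Lemma~\ref{lem:8-8-1} applied to both outer factors.

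Your route is more structural and, for this particular theorem, cleaner: you first establish the single identity $AU = BU\,M$ with $M = G^{-1}\hat J_s G$, and then exploit the specific form $\overline B = (BU)^*(BU)$ to get positive definiteness --- hence regularity --- for free. This collapses the eigenvalue and eigenvector arguments into one line each and avoids any case analysis on the Jordan block structure. The price is that your argument does not transfer verbatim to Theorem~\ref{Th:10-2-1}, where the left factor $R^*BU$ need not be Hermitian positive definite; the paper's determinant-based approach, by contrast, handles both theorems uniformly (only the invocation of Lemma~\ref{lem:8-8-1} changes).
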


In order to generate the projected eigenproblem (\ref{eq:6-3-13}), the most important task is to compute the projection $U$  (see (\ref{eq:11-5-2})). In practice, we have to compute $U$  approximately by a quadrature rule:
\begin{equation}\label{eq:2-13}
 U =QY=\frac{1}{2\pi \sqrt{-1}} \oint_\Gamma (zB-A)^{-1}BdzY \approx\frac{1}{2\pi \sqrt{-1}} \sum^{q}_{j=1}\omega_j(z_j  B- A)^{-1}B  Y,
\end{equation}
where $z_j$ are the quadrature nodes on $\Gamma$ associated with weights $\omega_j$. From (\ref{eq:2-13}), we know that the dominant work is  solving $q$ linear systems of the form
\begin{equation}\label{eq:11-9-1}
(z_iB-A)X_i = BY.
\end{equation}

The non-Hermitian FEAST algorithm (BFEAST) can be described as follows.
\begin{algorithm}\label{alg:5}
Input $ A,  B \in {\mathbb C}^{n \times n}$, an i.i.d. random
matrix $Y \in {\mathbb R}^{n \times  t}$
where $ t\geq s$, a closed curve $\Gamma$, a convergence tolerance
$\epsilon$, and ``${\texttt{{max\_iter}}}$" to control the
maximum number of iterations. The function ``{\sc BFEAST}"
computes eigenpairs $(\bar{\lambda}_i, \bar{{\bf x}}_i)$ of (\ref{eq:1-1}) that satisfies
\begin{equation}\label{con-cre_2}
\bar{\lambda}_i \ {\rm inside} \ \Gamma
\quad {\rm and} \quad \frac{\| A\bar{{\bf x}}_i - \bar{\lambda}_i  B\bar{{\bf x}}_i\|_2}{\| A \bar{{\bf x}}_i\|_2+
\| B \bar{{\bf x}}_i\|_2} < \epsilon.
\end{equation}
The results are stored in the vector $\Lambda_s$ and the matrix $X_s$. \end{algorithm}
\vspace{.2cm}
%\vspace{-5pt}
\begin{tabbing}
x\=xxx\= xxx\=xxx\=xxx\=xxx\=xxx\kill
\> Function $[ \Lambda_s,  X_s] = \textsc{BFEAST}( A,  B,  Y, \Gamma, \epsilon, \texttt{max\_iter})$\\
\>1.\>For $k = 1,\cdots, \texttt{max\_iter}$\\
\>2.\>\> Compute $U$  approximately by the quadrature rule (\ref{eq:2-13}). \\
\>3.\>\> Compute QR decompositions: $U = U_1R_1$ and $BU = U_2R_2.$\\
\>4.\>\>Form $\overline{A} =  U_2^*  A  U_1$ and $\overline{B} =  U_2^*  B  U_1$.\\
\>5.\>\> Solve the projected eigenproblem $\overline{A} {\bf y} = \bar{\lambda} \overline{B} {\bf y}$ of size $ t$ to obtain eigenpairs\\
\>\>\>$\{(\bar{\lambda}_i, {\bf y}_i)\}_{i=1}^{t}$. Set  $ \bar{{\bf x}}_i =  U_1{\bf y}_i, i = 1, 2, \ldots,  t$. \\
\>6.\>\> Set $ \Lambda_s=\left[ \  \right]$ and $X_s=\left[ \  \right]$.\\
\>7.\>\> For $i = 1: t$\\
\>8.\>\>\>If $(\bar{\lambda}_i,\bar{ {\bf x}}_i)$ satisfies (\ref{con-cre_2}), then $ \Lambda_s = [\Lambda_s, \bar{\lambda}_i]$ and $ X_s =[X_s, \bar{{\bf x}}_i]$.\\
\>9.\>\>End\\
\>10.\>\>If there are $s$ eigenpairs satisfying (\ref{con-cre_2}), stop. Otherwise, set $Y = U_1$.\\
\>11.\>End. \\
\end{tabbing}

\section{A randomized FEAST algorithms} In the previous two sections, we reviewed the FEAST algorithm, as well as its non-Hermitian variation, i.e., the BFEAST algorithm.  In this part, we first formulate another non-Hermitian scheme for the FEAST algorithm.  After that, the convergence analysis  will be given to illustrate the effectiveness of our new method. 

\subsection{The derivation of our method} In \cite{YCY14}, the authors used the oblique projection technique, rather than the wildly used orthogonal projection technique, to extend FEAST to the non-Hermitian problems. The key step is that they take the left subspace to $B\mathcal{K}$ instead of $\mathcal{K}$ used in the original FEAST algorithm. Here we present another scheme for the non-Hermitian FEAST algorithm. The intuition behind our new method is inspired by Lemma \ref{lem:8-8-1}. The following theorem validates our intuition.

\vspace{2mm}
\begin{theorem}\label{Th:10-2-1} Let
$R$ be an $n\times s$  random matrix, whose entries are independent and identically distributed (i.i.d.). Define
\begin{equation}\label{eq:10-2-2}
\widetilde{ A}=  R^* A U\quad \mbox{and}\quad \widetilde{ B}=  R^* B U.
\end{equation}
Let $\{(\tilde{\lambda}_i, {\bf y}_i)\}_{i=1}^s$ be the eigenpairs of
the projected eigenproblem
\begin{equation}\label{eq:10-2-1}
\widetilde{ A}{\bf y}=\tilde{\lambda}\widetilde{ B}{\bf y}.
\end{equation}
Then
$\{(\tilde{\lambda}_i, U{\bf y}_i)\}_{i=1}^s$ are the eigenpairs of (\ref{eq:1-1})  located inside ${\Gamma}$.
\end{theorem}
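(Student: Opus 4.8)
The plan is to follow the template of Theorem~\ref{Th:8-3-3}, replacing the left basis $(BU)^*$ by the random matrix $R^*$; the one genuinely new step is showing that the projected matrix $\widetilde{B}$ stays nonsingular, and this is precisely where the randomness of $R$ is used. First I would record the structure of $U$. By Lemma~\ref{lem:8-8-1}, with probability one the $s\times s$ matrix $W:=(S^{-1})_{(1:s,:)}Y$ is invertible, so by (\ref{eq:11-5-2}) we have $U=S_{(:,1:s)}W$, which has full column rank with ${\rm span}\{U\}=\mathcal{K}$. From the Weierstrass form (\ref{eq:8-3-1}) one reads off directly that $B S_{(:,1:s)}=T^{-1}_{(:,1:s)}$ and $A S_{(:,1:s)}=B S_{(:,1:s)}J_s$, where $J_s:={\rm diag}\big(J_{d_1}(\lambda_1),\ldots,J_{d_l}(\lambda_l)\big)$ is the leading $s\times s$ block of $J_d$; its spectrum, counted with algebraic multiplicity, is exactly $\lambda_1,\ldots,\lambda_l$, i.e.\ the $s$ eigenvalues of (\ref{eq:1-1}) inside $\Gamma$ (cf.\ (\ref{Jordan})). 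Substituting $U=S_{(:,1:s)}W$ gives $AU=BU\,\widehat{J}$ with $\widehat{J}:=W^{-1}J_sW$ similar to $J_s$, whence, by (\ref{eq:10-2-2}), $\widetilde{A}=R^*AU=R^*BU\,\widehat{J}=\widetilde{B}\,\widehat{J}$.

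The main obstacle is to show that $\widetilde{B}=R^*BU$ is nonsingular with probability one. Since the columns of $BU$ form a basis of $B\mathcal{K}$ (recalled in Section~3), $BU$ has full column rank $s$; using $BS_{(:,1:s)}=T^{-1}_{(:,1:s)}$ we get $\widetilde{B}=R^*T^{-1}_{(:,1:s)}W$ with $W$ invertible, so it suffices that $R^*T^{-1}_{(:,1:s)}$ be nonsingular. The map $R\mapsto\det\big(R^*T^{-1}_{(:,1:s)}\big)$ is a polynomial in the i.i.d.\ entries of $R$ which is not identically zero — a left inverse of the full-column-rank matrix $T^{-1}_{(:,1:s)}$ witnesses the value $1$, and admissibility over $\mathbb{R}$ is handled exactly as in the proof of Lemma~\ref{lem:8-8-1} — so its zero set has Lebesgue measure zero and $\widetilde{B}$ is invertible almost surely. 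This step has no counterpart in the BFEAST analysis, where $\overline{B}=(BU)^*BU$ is automatically Hermitian positive definite; here the genericity of $R$ does the work.

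Finally I would close the loop. With $\widetilde{B}$ invertible, (\ref{eq:10-2-1}) is equivalent to the standard eigenproblem $\widehat{J}{\bf y}=\tilde{\lambda}{\bf y}$, so each $\tilde{\lambda}_i$ is an eigenvalue of $\widehat{J}$, hence of $J_s$, hence lies inside $\Gamma$; since $\widehat{J}$ is $s\times s$, these recover exactly the $s$ target eigenvalues with their multiplicities and nothing spurious. For an associated eigenvector ${\bf y}_i\neq 0$, multiplying $\widehat{J}{\bf y}_i=\tilde{\lambda}_i{\bf y}_i$ on the left by $BU$ and using $AU=BU\widehat{J}$ yields $A(U{\bf y}_i)=\tilde{\lambda}_i B(U{\bf y}_i)$; moreover $U{\bf y}_i\neq 0$ because $U$ has full column rank. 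Thus $(\tilde{\lambda}_i,U{\bf y}_i)$ is an eigenpair of (\ref{eq:1-1}) located inside $\Gamma$, as claimed.
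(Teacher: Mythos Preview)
Your proof is correct and follows essentially the same approach as the paper: both rely on the Weierstrass factorization to obtain $\tilde{\lambda}\widetilde{B}-\widetilde{A}=R^*(T^{-1})_{(:,1:s)}(\tilde{\lambda}I_s-J_s)\,W$ with $W=(S^{-1})_{(1:s,:)}Y$, and both invoke Lemma~\ref{lem:8-8-1} twice to conclude that the outer factors $R^*(T^{-1})_{(:,1:s)}$ and $W$ are almost surely invertible. Your packaging via $\widetilde{A}=\widetilde{B}\widehat{J}$ with $\widehat{J}=W^{-1}J_sW$, reducing directly to a standard eigenproblem, is a slightly cleaner way to read off the eigenvectors than the paper's null-space computation, but the underlying algebra is identical.
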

\begin{proof}
By (\ref{eq:8-3-1}), one can verify that 
\begin{equation}\label{eq:10-3-2}
(\tilde{\lambda}  B -  A) S_{(:,1:s)} =  (T^{-1})_{(:,1:s)} ( \tilde{\lambda}  I_{(1:s,1:s)} - J_{(1:s,1:s)}).
\end{equation}
By (\ref{eq:11-5-2}), (\ref{eq:10-2-2}) and (\ref{eq:10-3-2}), we have
\begin{eqnarray}\label{eq:10-31-1}
\tilde{\lambda} \widetilde{ B} - \widetilde{ A} & = & R^*(\tilde{\lambda}  B -  A)U \\ \nonumber
 & = &  R^*(\tilde{\lambda}  B -  A) S_{(:,1:s)} ( S^{-1})_{(1:s,:)}  Y \\ \nonumber
 & = & R^*(T^{-1})_{(:,1:s)} ( \tilde{\lambda}  I_{(1:s,1:s)} - J_{(1:s,1:s)})( S^{-1})_{(1:s,:)}  Y.
\end{eqnarray}
Therefore, the characteristic polynomial of the eigenproblem  (\ref{eq:10-2-1}) is
$$
\det (\tilde{\lambda} \widetilde{ B} - \widetilde{ A}) = \det( R^*
 (T^{-1})_{(:,1:s)}) \det(\tilde{\lambda} I_{(1:s,1:s)} - J_{(1:s,1:s)})\det (( S^{-1})_{(1:s,:)}  Y).
$$
By Lemma \ref{lem:8-8-1}, we know that $R^*
 (T^{-1})_{(:,1:s)}$ and $( S^{-1})_{(1:s,:)}  Y$ are nonsingular. As a result, due to the special structure of $ J_{(1:s,1:s)}$, the roots of the characteristic polynomial $\det (\tilde{\lambda} \widetilde{ B} - \widetilde{ A})$ are $\lambda_1, \lambda_2, \ldots, \lambda_l$ with multiplicities $d_1, d_2, \ldots, d_l$ respectively.

Recall that $\lambda_1,\ldots,\lambda_l$ are not necessary distinct. Without loss of generality, let us consider the case where  $\tilde{\lambda} = \lambda_1=\lambda_2$ and $\tilde{\lambda}\neq \lambda_i$ for $3\leqslant i \leqslant l$.   Since
$(\tilde{\lambda}\widetilde{ B} - \widetilde{ A}){\bf y} = 0$ and $R^*
 (T^{-1})_{(:,1:s)}$ is nonsingular, by (\ref{eq:10-31-1}) we have
$$
(\tilde{\lambda}  I_{(1:s,1:s)} - J_{(1:s,1:s)}) ( S^{-1})_{(1:s,:)}  Y {\bf y} =  0.
$$
 The above equation
in turn implies that
$( S^{-1})_{(1:s,:)}  Y{\bf y} = \alpha{\bf e}_{1}+\beta{\bf e}_{d_1+1}$
for some scalars $\alpha$ and $\beta$ not both zero due to the special structure of $ J$, where ${\bf e}_1$ and ${\bf e}_{d_1+1}$ are the first and the $(d_1+1)$th columns of the $s\times s$ identity matrix, respectively.
Therefore,
$$U {\bf y}
= S_{(:,1:s)} ( S^{-1})_{(1:s,:)}  Y{\bf y}
=  \alpha S_{(:,1:s)} {\bf e}_1+\beta S_{(:,1:s)} {\bf e}_{d_1+1}.$$
% In other words, $\alpha^{-1}  U \tilde{{\bf x}}$ is the first column of $ S$.
Note that the first and the $(d_1+1)$th columns of $ S$ are
the eigenvector of (\ref{eq:1-1})
corresponding to
the eigenvalues $\lambda_1$ and $\lambda_2$ respectively by (\ref{Jordan}). Thus, their linear combinations are the eigenvectors associated with $\tilde{\lambda}$. The proof is completed.

\end{proof}

Theorem \ref{Th:10-2-1} tells us that the FEAST algorithm can deal with the non-Hermitian eigenproblems if we take the left subspace spanned by a random matrix. Due to the usage of  random matrix, we call our new non-Hermitian FEAST algorithm RFEAST for ease of reference.

\begin{algorithm}\label{alg:6}
Input $ A,  B \in {\mathbb C}^{n \times n}$, an i.i.d. random
matrix $Y \in {\mathbb R}^{n \times  t}$
where $ t\geq s$, a closed curve $\Gamma$, a convergence tolerance
$\epsilon$, and ``${\texttt{{max\_iter}}}$" to control the
maximum number of iterations. The function ``{\sc RFEAST}"
computes eigenpairs $(\tilde{\lambda}_i, \tilde{{\bf x}}_i)$ of (\ref{eq:1-1}) that satisfies
\begin{equation}\label{con-cre_3}
\tilde{\lambda}_i \ {\rm inside} \ \Gamma
\quad {\rm and} \quad \frac{\| A\tilde{{\bf x}}_i - \tilde{\lambda}_i  B\tilde{{\bf x}}_i\|_2}{\| A \tilde{{\bf x}}_i\|_2+
\| B \tilde{{\bf x}}_i\|_2} < \epsilon.
\end{equation}
The results are stored in the vector $\Lambda_s$ and the matrix $X_s$. \end{algorithm}
\vspace{.2cm}
%\vspace{-5pt}
\begin{tabbing}
x\=xxx\= xxx\=xxx\=xxx\=xxx\=xxx\kill
\> Function $[ \Lambda_s,  X_s] = \textsc{RFEAST}( A,  B,  Y, \Gamma, \epsilon, \texttt{max\_iter})$\\
\>1.\>For $k = 1,\cdots, \texttt{max\_iter}$\\
\>2.\>\> Compute $U$  approximately by the quadrature rule (\ref{eq:2-13}). \\
\>3.\>\> Generate an $n\times t$ random matrix $R$, and compute QR decompositions: \\
\>\>\> $U = U_1R_1$ and $R = U_2R_2.$\\
\>4.\>\>Form $ \tilde{A} =  U_2^*  A  U_1$ and $\tilde{B} =  U_2^*  B  U_1$.\\
\>5.\>\> Solve the projected eigenproblem $\tilde{A} {\bf y} = \tilde{\lambda} \tilde{B} {\bf y}$ of size $ t$ to obtain eigenpairs\\
\>\>\>$\{(\tilde{\lambda}_i, {\bf y}_i)\}_{i=1}^{t}$. Set  $ \tilde{{\bf x}}_i =  U_1{\bf y}_i, i = 1, 2, \ldots,  t$. \\
\>6.\>\> Set $ \Lambda_s=\left[ \  \right]$ and $X_s=\left[ \  \right]$.\\
\>7.\>\> For $i = 1: t$\\
\>8.\>\>\>If $(\tilde{\lambda}_i,\tilde{ {\bf x}}_i)$ satisfies (\ref{con-cre_3}), then $ \Lambda_s = [\Lambda_s, \tilde{\lambda}_i]$ and $ X_s =[X_s, \tilde{{\bf x}}_i]$.\\
\>9.\>\>End\\
\>10.\>\>If there are $s$ eigenpairs satisfying (\ref{con-cre_3}), stop. Otherwise, set $Y = U_1$.\\
\>11.\>End. \\
\end{tabbing}
%\vspace{.2cm}

Our method can make the FEAST algorithm applicable to the non-Hermitian problems. Obviously, in each iteration, the dominant work in our method is to compute the projection $U$ by a quadrature scheme, see (\ref{eq:2-13}) for details.  As with other contour-integral based methods, our algorithm replaces the difficulty of solving the eigenvalue problem (\ref{eq:1-1}) by the difficulty of solving the linear systems (\ref{eq:11-9-1}), and has a good potential to be parallelized.

\subsection{Convergence Analysis}
In this part, we study the convergence properties of our new method (Algorithm \ref{alg:6}) to show its effectiveness.

An important quantity for the convergence properties of projection methods is the distance of the exact eigenvector from the search subspace \cite{saad}. We begin the convergence analysis of our method  from this perspective. For notational convenience, we represent the approximate projection computed in the $k$th iteration in Algorithm \ref{alg:6} by  $U^{(k)}$.

Compute the contour integral
\begin{equation}\label{eq:11-20-1}
f(\mu) =  \frac{1}{2\pi \sqrt{-1}}\oint_{\Gamma}\frac{1}{z-\mu}dz
\end{equation}
by a $q$-point quadrature rule on $\Gamma$:
\begin{equation}\label{eq:11-10-1}
f(\mu) \approx\tilde{f}(\mu) =\frac{1}{2\pi \sqrt{-1}} \sum^{q}_{i=1}\frac{\omega_i}{z_i-\mu},
\end{equation}
where $z_j$ are the quadrature nodes on $\Gamma$ associated with weights $\omega_j$.  Let $\Gamma$ be an unit circle with center at $\gamma$ and $\mu = \gamma +re^{\sqrt{-1}\theta}, \theta\in (-\pi, \pi]$. Therefore,  $\mu$ is located inside $\Gamma$ when $0 \leq r< 1$ and is located outside $\Gamma$ when $r> 1$. In theory, according to the  residue theorem, we have that $f(\mu)=1$ if $\mu$ is located inside $\Gamma$ and  $f(\mu)=0$ if $\mu$ is located outside $\Gamma$ \cite{Rudin}. Compute the approximation $\tilde{f}(\mu$) of  $f(\mu)$ by the Gauss-Legendre quadrature with 16 integration points on $\Gamma$.  \textsc{Fig} \ref{Fig:4-4-1} depicts the magnitude of $|\tilde{f}(\mu)|$. We can see that $|\tilde{f}(\mu)|$ is close 1 when $\mu$ is contained inside $\Gamma$ and is close to $0$ when $\mu$ is outside $\Gamma$. Without loss of generality, assume that 
\begin{equation}\label{eq:12-3-1}
|\tilde{f}(\lambda_1)| \geq |\tilde{f}(\lambda_2)|\geq \cdots \geq |\tilde{f}(\lambda_l)| >|\tilde{f}(\lambda_{l+1})|\geq\cdots \geq |\tilde{f}(\lambda_m)|. 
\end{equation}

\begin{figure}
\begin{center}
\includegraphics[width=13cm]{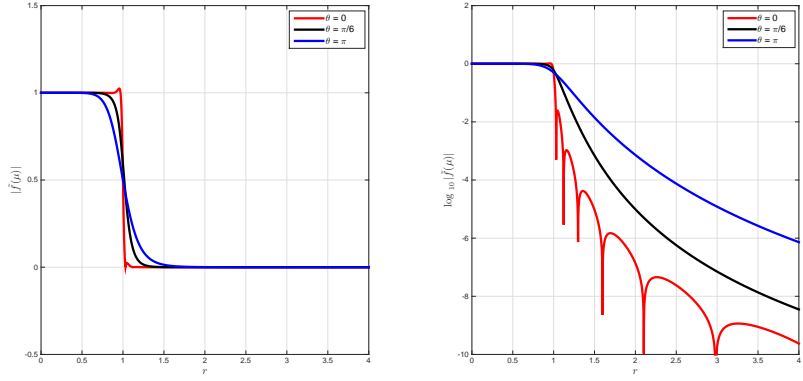}
\caption{Here $\Gamma$ is a unit circle with center at $\gamma$. The approximation $\tilde{f}(\mu)$, where $\mu = \gamma+r e^{\sqrt{-1}\theta}, \theta\in (-\pi, \pi]$, is computed by the Gauss-Legendre quadrature with $16$ quadrature nodes.  Therefore, $\mu$ is located inside $\Gamma$ when $0 \leq r< 1$ and is located outside $\Gamma$ when $r> 1$. The left picture shows the general shape of $|\tilde{f}(\mu)|$, while the right one shows the logarithmic scale shape of the function.}
\label{Fig:4-4-1}
\end{center}
\end{figure}

\begin{theorem}\label{Th:11-10-1}
Let $d_0 = 0$, then $S_{(:, 1+d_0+\ldots+d_{j-1})}$ is an eigenvector corresponding to $\lambda_j$. Let  $t$ be the size of starting vectors $Y$ satisfying $t> s$, then there exists an integer $l^{\prime}, l^{\prime}>l$, such that $\sum^{l^{\prime}-1}_id_i<t\leq \sum^{l^{\prime}}_id_i$. Suppose the eigenvalues outside $\Gamma$ are simple, which implies that $d_i = 1$ for $i = l+1, \ldots, m$.
There exists a vector $v_j^{(k)} \in {\rm span}\{U^{(k)}\}$, $j = 1, \ldots, l$, such that
\begin{equation}\label{eq:11-10-3}
\|S_{(:, 1+d_0+\ldots+d_{j-1})}-v_j^{(k)}\|_2 \leq \tau_j  \big(\dfrac{ |\tilde{f}(\lambda_{l^{\prime}})|}{|\tilde{f}(\lambda_{j})|}\big)^k,
\end{equation}
where $ \tau_j$ is a constant. In particular,
\begin{equation}\label{eq:11-10-3}
\|(I_n - Q_{(k)})S_{(:, 1+d_0+\ldots+d_{j-1})}\|_2  \leq \tau_j \big(\dfrac{ |\tilde{f}(\lambda_{l^{\prime}})|}{|\tilde{f}(\lambda_{j})|}\big)^k,
\end{equation}
where $Q_{(k)}$ is the orthogonal projector onto the subspace ${\rm span}\{U^{(k)}\}$.
\end{theorem}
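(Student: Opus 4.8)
The plan is to track what the quadrature rule does to the projector. Let $U^{(k)}$ be the matrix computed in the $k$th iteration by the approximate quadrature \eqref{eq:2-13}, starting from $Y^{(k-1)}$. Writing $\widetilde Q = \frac{1}{2\pi\sqrt{-1}}\sum_j \omega_j (z_jB-A)^{-1}B$ for the discretized spectral projector, I would first diagonalize $\widetilde Q$ in the Weierstrass coordinates of \eqref{eq:8-3-1}: by the residue-style computation that produced \eqref{equ:7-26-4}, one gets $\widetilde Q = S\,\mathrm{diag}\big(\tilde f(\lambda_1),\ldots,\tilde f(\lambda_m),\,*\,\big)\,S^{-1}$ up to the nilpotent block contributions, where the scalars $\tilde f(\lambda_i)$ are exactly the quantities defined in \eqref{eq:11-10-1} (for simple eigenvalues outside $\Gamma$ the nilpotent part contributes nothing extra, which is why the hypothesis $d_i=1$ for $i>l$ is imposed). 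Hence $U^{(k)} = \widetilde Q Y^{(k-1)}$ and, since Algorithm~\ref{alg:6} sets $Y^{(k)} = U_1^{(k)}$ with $\mathrm{span}\{U_1^{(k)}\}=\mathrm{span}\{U^{(k)}\}$, we have $\mathrm{span}\{U^{(k)}\} = \mathrm{span}\{\widetilde Q^{\,k} Y^{(0)}\}$.

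Next I would recast this as a standard subspace-iteration (power-iteration) convergence statement. In the basis $S$, applying $\widetilde Q^{\,k}$ scales the coordinate directions by $\tilde f(\lambda_i)^k$. Write $Y^{(0)} = S C$ for some $n\times t$ matrix $C$; by Lemma~\ref{lem:8-8-1} the relevant leading rows of $C$ (more precisely $(S^{-1})_{(1:\,\sum_{i\le l'} d_i,\,:)}Y^{(0)}$) form a full-rank block with probability $1$, so the genericity assumption needed for subspace iteration holds almost surely. Then the classical bound for subspace iteration (e.g.\ the gap-dependent estimate in \cite{saad}) gives, for each target eigenvector $S_{(:,\,1+d_0+\cdots+d_{j-1})}$, a vector $v_j^{(k)}\in\mathrm{span}\{\widetilde Q^{\,k}Y^{(0)}\}=\mathrm{span}\{U^{(k)}\}$ with
\[
\|S_{(:,\,1+d_0+\cdots+d_{j-1})} - v_j^{(k)}\|_2 \;\le\; \tau_j\Big(\frac{|\tilde f(\lambda_{l'})|}{|\tilde f(\lambda_j)|}\Big)^k,
\]
the ratio being the $(j,l')$ "spectral gap" of $\widetilde Q$: the $t$-dimensional iterate captures the directions $\lambda_1,\ldots,\lambda_{l'-1}$ fully and the contamination from the first omitted direction $\lambda_{l'}$ decays at rate $|\tilde f(\lambda_{l'})|/|\tilde f(\lambda_j)|<1$ by the ordering \eqref{eq:12-3-1} and $j\le l<l'$. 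The constant $\tau_j$ absorbs $\|S\|$, $\|S^{-1}\|$, and the conditioning of the initial coordinate block $C$. Finally, \eqref{eq:11-10-3} (the second display) follows immediately: since $Q_{(k)}$ is the orthogonal projector onto $\mathrm{span}\{U^{(k)}\}$ and $v_j^{(k)}$ lies in that subspace, $\|(I_n-Q_{(k)})S_{(:,\cdots)}\|_2 = \mathrm{dist}(S_{(:,\cdots)},\mathrm{span}\{U^{(k)}\}) \le \|S_{(:,\cdots)} - v_j^{(k)}\|_2$.

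I expect the main obstacle to be the bookkeeping around the nilpotent block $N_{n-d}$ and the Jordan blocks $J_{d_i}(\lambda_i)$: when $\widetilde Q$ is applied to a defective pencil, $\widetilde Q^{\,k}$ is not simply a diagonal scaling but involves polynomial-in-$k$ factors coming from the Jordan structure of the eigenvalues inside $\Gamma$ and from $\tilde f$ evaluated on the nilpotent (infinite-eigenvalue) part. The cleanest way around this is to use the hypothesis that the eigenvalues outside $\Gamma$ are simple (so the "slowest decaying leaked" direction $\lambda_{l'}$ behaves scalar-like), and to let the polynomial-in-$k$ prefactors from the inside Jordan blocks be swallowed into a slightly enlarged constant $\tau_j$ — or, more carefully, observe that since $\tilde f$ is constant ($=1$, approximately) near the inside eigenvalues, the derivative terms that generate the polynomial growth are negligible and can be bounded by adjusting $\tau_j$. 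A secondary technical point is making the "with probability $1$" clause in Lemma~\ref{lem:8-8-1} apply to the $l'$-block rather than the $s$-block, but since $l'>l$ and $t>s$ this is the same argument applied to a larger leading submatrix of $S^{-1}$.
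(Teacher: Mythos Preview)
Your proposal is correct and follows essentially the same route as the paper: diagonalize the discretized projector $\widetilde Q$ in the Weierstrass basis $S$ so that $\widetilde Q = S D S^{-1}$ with $D$ block--upper--triangular carrying $\tilde f(\lambda_i)$ on the diagonal, observe that $\mathrm{span}\{U^{(k)}\}=\mathrm{span}\{\widetilde Q^{\,k}Y\}$, and then run a subspace--iteration argument using the full rank of the leading $t\times t$ block $Z=(S^{-1})_{(1:t,:)}Y$ guaranteed by Lemma~\ref{lem:8-8-1}. The paper carries this out explicitly rather than invoking a packaged bound: it writes $U^{(0)}=[(S_{(:,1:s)}+S_{(:,t+1:n)}E_{(0)}),\,V_{(0)}]Z$ and tracks $E_{(k)}=(D_{(t+1:n,t+1:n)})^kE_{(0)}(D_{(1:s,1:s)})^{-k}$ through the iteration, then reads off $v_j^{(k)}$ as the column of $S_{(:,1:s)}+S_{(:,t+1:n)}E_{(k)}$ indexed by $c=1+d_0+\cdots+d_{j-1}$.

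One clarification on the obstacle you anticipate: there is in fact \emph{no} polynomial--in--$k$ prefactor to absorb. The column index $c$ is the \emph{first} column of the $j$th Jordan block in $D_{(1:s,1:s)}$, and since the block is upper triangular with $\tilde f(\lambda_j)$ on the diagonal, one has $(D_{(1:s,1:s)})^{-k}e_c = \tilde f(\lambda_j)^{-k}e_c$ exactly. This is what the paper calls the ``special structure of $((D_{(1:s,1:s)})^{-1})^k$''. So the Jordan structure inside $\Gamma$ is harmless for the eigenvector directions singled out in the statement, and your suggested workarounds (inflating $\tau_j$, or arguing that derivative terms are small) are unnecessary. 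The simplicity hypothesis on the outside eigenvalues is used exactly where you say: it forces $N_{n-d}=0$ and makes $D_{(t+1:n,t+1:n)}$ diagonal, so $\|(D_{(t+1:n,t+1:n)})^k\|_2\le|\tilde f(\lambda_{l'})|^k$.
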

\begin{proof} Let $U^{(0)} = Y$ be an $n\times t$ random matrix and $Z= (S^{-1})_{(1:t, :)}U^{(0)}$. By Lemma \ref{lem:8-8-1}, we know $Z$ is nonsingular, then
\begin{eqnarray}
U^{(0)} & = & S S^{-1}U^{(0)} = \left[S_{(:, 1:t)}(S^{-1})_{(1:t, :)}+S_{(:, t+1:n)}(S^{-1})_{(t+1:n, :)}\right]U^{(0)} \nonumber \\
 & = &  \left[S_{(:, 1:t)}+S_{(:, t+1:n)}(S^{-1})_{(t+1:n, :)} U^{(0)}Z^{-1}\right] Z \nonumber \\
  & = &\left[(S_{(:, 1:s)}+S_{(:, t+1:n)} E_{(0)}), V_{(0)}\right]Z, \label{eq:11-10-4}
\end{eqnarray}
where $E_{(0)}$ is the first $s$ columns of $(S^{-1})_{(t+1:n, :)} U^{(0)}Z^{-1}$, and $V_{(0)}$
is the last $(t-s)$ columns of matrix $S_{(:, 1:t)}+S_{(:, t+1:n)}(S^{-1})_{(t+1:n, :)}U^{(0)} Z^{-1}$.

Under the assumption that the eigenvalues outside $\Gamma$ are simple, the matrix $N_{n-d}$ in (\ref{eq:8-3-1}) is a zero matrix. Let
\begin{equation}\label{eq:11-18-1}
D =  \frac{1}{2\pi \sqrt{-1}} \sum^{q}_{j=1}\omega_j\begin{bmatrix}
  (z_jI_d-J_d)^{-1}    &   0 \\
    0  &  0
\end{bmatrix}.
\end{equation}
It was shown in \cite{IDS15, Yin16} that $|D_{(i, i)}|>0$, for $i = 1, \ldots, s$.  According to (\ref{eq:8-4}), (\ref{eq:2-13}), (\ref{eq:11-10-4}) and (\ref{eq:11-18-1}), we have
\begin{eqnarray}
U^{(1)} & = & \frac{1}{2\pi \sqrt{-1}} \sum^{q}_{j=1}\omega_j(z_j  B- A)^{-1}B U^{(0)}  =  S DS^{-1}U^{(0)} \nonumber \\
  & = & \left[(S_{(:, 1:s)}+S_{(:, t+1:n)} E_{(1)})D_{(1:s, 1:s)}, V_{(1)}\right]Z,
\end{eqnarray}
where $ E_{(1)} = D_{(t+1: n, t+1:n)} E_{(0)}(D_{(1:s, 1:s)})^{-1}$ and
$V_{(1)} =S DS^{-1}V_{(0)}$. Denote the QR decomposition of $U^{(k)} $ by $U^{(k)} = U_{k}R_{k}$. By induction, we have the following relationship
\begin{eqnarray}
U^{(k)} & = & S DS^{-1}U_{k-1} = S DS^{-1}U^{(k-1)}(R_{k-1})^{-1} \nonumber \\
 & = & \left[(S_{(:, 1:s)}+S_{(:, t+1:n)} E_{(k)})(D_{(1:s, 1:s)})^k, V_{(k)}\right]Z(R_{k-1}\ldots R_1)^{-1}, \label{eq:11-11-2}
\end{eqnarray}
where $ E_{(k)} = (D_{(t+1: n, t+1:n)})^k E_{(0)}((D_{(1:s, 1:s)})^{-1})^k$ and
$V_{(k)} =(S DS^{-1})^kV_{(0)}$.

Since $Z(R_{k-1}\ldots R_1)^{-1}$ is nonsingular, from (\ref{eq:11-11-2}) we conclude  that the columns of $S_{(:, 1:s)}+S_{(:, t+1:n)} E_{(k)}$ are in the subspace ${\rm span}\{U^{(k)}\}$. In particular, vectors $v_{j}^{(k)} = S_{(:, 1+d_0+\ldots+d_{j-1})}+ S_{(:, t+1:n)} (E_{(k)})_{(:, 1+d_0+\ldots+d_{j-1})} \in {\rm span}\{U^{(k)}\}$ for $j = 1,\ldots, l$. By the special structure of $(D_{(1:s, 1:s)})^{-1})^k$, we have
\begin{eqnarray}
\|S_{(:, 1+d_0+\ldots+d_{j-1})}-v_j^{(k)}\|_2 & = & \|S_{(:, t+1:n)} (E_{(k)})_{(:, 1+d_0+\ldots+d_{j-1})}\|_2 \nonumber \\
& = & \big(\dfrac{1}{|\tilde{f}(\lambda_{j})|}\big)^k\|S_{(:, t+1:n)}(D_{(t+1: n, t+1:n)})^k (E_{(0)})_{(:, 1+d_0+\ldots+d_{j-1})}\|_2 \nonumber \\
 & \leq &  \tau_j  \big(\dfrac{ |\tilde{f}(\lambda_{l^{\prime}})|}{|\tilde{f}(\lambda_{j})|}\big)^k,\label{eq:11-11-3}
\end{eqnarray}
where $ \tau_j = \|S_{(:, t+1:n)}\|_2\| (E_{(0)})_{(:, 1+d_0+\ldots+d_{j-1})}\|_2$.

Moreover,
\begin{eqnarray}
\|(I_n - Q_{(k)})S_{(:, 1+d_0+\ldots+d_{j-1})}\|_2 & = &  \min_{v\in {\rm span}\{U^{(k)}\}} \|S_{(:, 1+d_0+\ldots+d_{j-1})}-v \|_2 \nonumber \\
 & \leq & \tau_j  \big(\dfrac{ |\tilde{f}(\lambda_{l^{\prime}})|}{|\tilde{f}(\lambda_{j})|}\big)^k.
\end{eqnarray}
\end{proof}

Note that  $l^{\prime} >l$, which means $\lambda_{l^{\prime}}$  is located outside $\Gamma$. Suppose the $ |\tilde{f}(\lambda_{l^{\prime}})|$ is about $1.0\times 10^{-3}$, we can expect that there exists a vector $v_j^{(k)}$ in  ${\rm span}\{U^{(k)}\}$ such that $\|S_{(:, 1+d_0+\ldots+d_{j-1})}-v_j^{(k)}\|_2\to 0$ at a rate of $10^{-3k}$. 

Let $P_{(k)}$ be  the oblique projector onto the subspace ${\rm span}\{U^{(k)}\}$ and orthogonal to the left subspace generated by a random matrix in the $k$th iteration in Algorithm \ref{alg:6}. Define approximate operators $A_k = P_{(k)}AQ_{(k)}$ and $B_k = P_{(k)}BQ_{(k)}$. The following theorem gives an upper bound for the residual norm of the exact eigenpair with respect to the approximate operator pair $(A_k, B_k)$.

\begin{theorem}\label{Th:11-11-1}
Let $\sigma^{(k)} = \|P_{(k)}(A-\lambda B)(I_n-Q_{(k)})\|_2$.  Then the residual norms of $(\lambda_j, S_{(:, 1+d_0+\ldots+d_{j-1})}), j = 1,\ldots, l$, for the approximate operator pair  $(A_k, B_k)$ saftisfy
\begin{equation}\label{eq:11-11-5}
\|(A_k-\lambda_j B_k)S_{(:, {(:, 1+d_0+\ldots+d_{j-1})})} \|_2\leq  \sigma^{(k)} \tau_j  \big(\dfrac{ |\tilde{f}(\lambda_{l^{\prime}})|}{|\tilde{f}(\lambda_{j})|}\big)^k
\end{equation}
\end{theorem}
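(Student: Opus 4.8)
The plan is to reduce the claimed residual bound for the approximate operator pair $(A_k, B_k)$ to the subspace-distance estimate already established in Theorem \ref{Th:11-10-1}. The starting point is the identity $A_k - \lambda_j B_k = P_{(k)}(A - \lambda_j B)Q_{(k)}$, which follows directly from the definitions $A_k = P_{(k)}AQ_{(k)}$ and $B_k = P_{(k)}BQ_{(k)}$ together with linearity. Writing ${\bf x}_j := S_{(:, 1+d_0+\ldots+d_{j-1})}$ for brevity, I would next use the fact that ${\bf x}_j$ is an exact eigenvector, so $(A - \lambda_j B){\bf x}_j = 0$. The trick is then to insert this vanishing term: since $P_{(k)}(A-\lambda_j B){\bf x}_j = 0$, we have
\begin{equation*}
(A_k - \lambda_j B_k){\bf x}_j = P_{(k)}(A-\lambda_j B)Q_{(k)}{\bf x}_j = P_{(k)}(A-\lambda_j B)Q_{(k)}{\bf x}_j - P_{(k)}(A-\lambda_j B){\bf x}_j = -P_{(k)}(A-\lambda_j B)(I_n - Q_{(k)}){\bf x}_j.
\end{equation*}

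From here the bound is immediate by submultiplicativity of the spectral norm: taking norms gives
\begin{equation*}
\|(A_k - \lambda_j B_k){\bf x}_j\|_2 \le \|P_{(k)}(A-\lambda_j B)(I_n - Q_{(k)})\|_2 \, \|(I_n - Q_{(k)}){\bf x}_j\|_2 = \sigma^{(k)}\,\|(I_n - Q_{(k)}){\bf x}_j\|_2,
\end{equation*}
where I have recognized the first factor as exactly the quantity $\sigma^{(k)}$ defined in the statement. Finally, I would invoke the second conclusion of Theorem \ref{Th:11-10-1}, namely the estimate $\|(I_n - Q_{(k)})S_{(:, 1+d_0+\ldots+d_{j-1})}\|_2 \le \tau_j \big(|\tilde{f}(\lambda_{l'})|/|\tilde{f}(\lambda_j)|\big)^k$, and substitute it into the bound above to obtain (\ref{eq:11-11-5}).

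I do not anticipate a genuine obstacle here; the argument is the standard "insert the zero eigen-residual and exploit the projector" manipulation, and all the quantitative work has already been done in Theorem \ref{Th:11-10-1}. The only point requiring a little care is the algebraic identity $A_k - \lambda_j B_k = P_{(k)}(A - \lambda_j B)Q_{(k)}$ and the legitimacy of subtracting $P_{(k)}(A-\lambda_j B){\bf x}_j$ — one must note that $Q_{(k)}$ is a projector, so $Q_{(k)}{\bf x}_j$ and ${\bf x}_j$ differ precisely by $(I_n - Q_{(k)}){\bf x}_j$, and that $A-\lambda_j B$ annihilates ${\bf x}_j$ exactly (not just approximately). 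One should also remark, for completeness, that $\sigma^{(k)}$ is finite since all operators involved are bounded on the finite-dimensional space $\mathbb{C}^n$. With these observations the proof is a three-line computation followed by a citation of the previous theorem.
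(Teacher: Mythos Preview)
Your proof is correct and follows essentially the same route as the paper: the paper also reduces $(A_k-\lambda_j B_k){\bf x}_j$ to $P_{(k)}(A-\lambda_j B)(I_n-Q_{(k)}){\bf x}_j$ (citing Saad's Theorem~4.7 rather than writing out the ``insert the zero eigen-residual'' step explicitly), then uses idempotence of $I_n-Q_{(k)}$ and submultiplicativity to factor out $\sigma^{(k)}$, and finally invokes Theorem~\ref{Th:11-10-1}. The only cosmetic difference is that the paper displays the idempotence $(I_n-Q_{(k)})^2=I_n-Q_{(k)}$ as a separate line, which is precisely what justifies your bound $\|M(I_n-Q_{(k)}){\bf x}_j\|_2\le\|M\|_2\,\|(I_n-Q_{(k)}){\bf x}_j\|_2$ with $M=P_{(k)}(A-\lambda_j B)(I_n-Q_{(k)})$.
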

\begin{proof}
Similar to the proof of Lemma 2 in \cite{IDS15} and Theorem 4.7 in \cite{saad}, we have
\begin{eqnarray}
\|(A_k-\lambda_j B_k)S_{(:, 1+d_0+\ldots+d_{j-1})} \|_2 & = & \|P_{(k)}(A-\lambda_j B)(I_n-Q_{(k)})S_{(:, 1+d_0+\ldots+d_{j-1})}\|_2 \nonumber\\
 & = & \|P_{(k)}(A-\lambda_j B)(I_n-Q_{(k)})(I_n-Q_{(k)})S_{(:, 1+d_0+\ldots+d_{j-1})}\|_2 \nonumber\\
 & \leq & \sigma^{(k)} \|(I_n-Q_{(k)})S_{(:, 1+d_0+\ldots+d_{j-1})}\|_2. \nonumber
\end{eqnarray}
By (\ref{eq:11-10-3}), we establish our result (\ref{eq:11-11-5}).
\end{proof}

Theorem \ref{Th:11-11-1}  says that the residual norm associated with the exact eigenpair  $(\lambda_j, S_{(:, 1+d_0+\ldots+d_{j-1})})$  converges  at a rate of $\big( |\tilde{f}(\lambda_{l^{\prime}})|/|\tilde{f}(\lambda_{j})|\big)$ with respect to the iteration counts.

\section{Numerical Experiments}\label{sec:experiments}
In this section, we present some numerical experiments to demonstrate the numerical performance of our new non-Hermitian FEAST algorithm (RFEAST). The experiments are organized into threes sets. The first set aims at demonstrating the convergence behavior of our new method. The second set is devoted to comparing our technique  with another non-Hermitian variant of FEAST, that is the BFEAST algorithm (Algorithm \ref{alg:5}). In the last set, we would like to compare our RFEAST method with the \textsc{Matlab} built-in function \texttt{eig}. 
 For the approximation eigenpairs $(\tilde{\lambda}_i,\tilde{{\bf x}}_i)$, define the relative residual norms
\begin{equation}\label{eq:11-22-1}
r_i = \frac{\| A\tilde{{\bf x}}_i - \tilde{\lambda}_i  B\tilde{{\bf x}}_i\|_2}{\| A \tilde{{\bf x}}_i\|_2+\| B \tilde{{\bf x}}_i\|_2}.
\end{equation}
We use the maximum relative residual norm defined as $\texttt{Res} = \max_{1\leq i \leq s}r_i$ to assess the accuracy achieved by the test methods.
All computations are carried out in \textsc{Matlab} version R2014b on a MacBook with an Intel Core i5 2.5 GHz processor and 8 GB RAM.

The test matrices presented in \textsc{Table} \ref{Tab:5-1} are available from the Matrix Market collection\footnote{http://math.nist.gov/MatrixMarket/}. They are the real-world problems from scientific and
engineering applications. All test problems are non-Hermitian. The first four test problems are generalized eigenvalue problems and the last two test problems are standard. The region of interest for each test problem is a circle with center at $\gamma$ and radius $\rho$. The value of $s$ is the number of eigenvalues inside the target region. In all experiments, we use the Gauss-Legendre quadrature with $16$ quadrature nodes to compute the approximate projection $U$ (see (\ref{eq:2-13})). The generalized shifted linear systems (see (\ref{eq:11-9-1})) involved are computed by direct method.
We first use the \textsc{Matlab} function \texttt{lu} to compute the LU
decomposition of $ A-z_j B, j =1 ,2,\ldots, q$, and then perform the triangular
substitutions to get the corresponding solutions.
\begin{table}
\centering
\caption{Test problems from Matrix Market that are used in our experiments.}
\footnotesize{
%{\newcommand{\q}[1]{\mc{1}{}{\small\tt #1}}
\noindent
\begin{tabular}{|c|c|c|c|c|c|}
\hline
No.&Problem & Type & $n$ & Region: $(\gamma, \rho)$ & $s$  \\ \hline
\hline
 1 & BFW782& gen. &782  &$(-6.0\times 10^{5}, 3.0\times 10^{5})$ &230 \\ \hline
 2& DWG961 & gen. & 961  &$(5.0\times 10^2, 2.0\times 10^2)$ &157 \\ \hline
 3 & UTM1700& gen. & 1700 & $(4.0, 1.0)$ & 96\\ \hline
 4 & MHD4800&gen. &4800  & $(-6.0, 3.0)$ & 169 \\ \hline
 5 &OLM5000 & stand. &5000 &$(-1.0\times 10^4, 0.6\times 10^4)$ & 204 \\ \hline
 6 & DW8192&stand. & 8192& $(1.0, 0.2)$ &270 \\ \hline
\end{tabular}}
\label{Tab:5-1}
\end{table}

\subsection{The convergence behaviour}
The FEAST algorithm is a stable and fast technique \cite{kramer, TP13}. It was formulated for the Hermitian problems \cite{polizzi}. The goal of our work is to adapt FEAST to the non-Hermitian cases. Meanwhile, we hope that our method retains the effectiveness of the FEAST algorithm. The objective of this experiment is two-fold. First, we would like to validate the convergence properties analysed in Section 4. Second, we would like to demonstrate the influence of the size of starting vectors on our new method.

In each iteration there are $t-s$ spurious eigenvalues. The spurious eigenvalues outside the target region can be easily detected according to their coordinates. For the spurious eigenvalues inside the target region, in \cite{YCY14} the authors introduced a tolerance $\eta$ to filter them. The idea behind is that the spurious eigenvalues can not achieve high accuracy; as the iteration process proceeds, there will be a gap in accuracy between the desired eigenpairs and the spurious ones. If the relative residual norm of an eigenpair is less that $\eta$, then the eigenpair is viewed as desired one and referred as filtered eigenpair. In the experiment, we set the filtering tolerance $\eta = 1.0\times 10^{-2}$.
In \textsc{Fig} \ref{Fig:5-1}, we plot the  \texttt{Res}'s from the iteration that the number of filtered eigenpairs attains $s$ for the first time to the $10$th iteration for the cases  $t = \lceil 1.2s \rceil$ and $t = \lceil 1.5s \rceil$, respectively. Here, we assume that the number $s$ of eigenvalues inside the region of interest is known.  Theorem  \ref{Th:11-11-1} tells us the residual norm will converge with the factor  $|\tilde{f}(\lambda_{l^{\prime}})|/|\tilde{f}(\lambda_{i})|$ for the exact eigenpair $(\lambda_i,{{\bf x}}_i)$ with respect to the iteration counts. \textsc{Fig} \ref{Fig:5-1} shows the maximum relative residual norm \texttt{Res} decreases monotonically, as expected, until the accuracy can not be further improved.   On the other hand, a larger subspace size $t$ leads to a smaller $|\tilde{f}(\lambda_{l^{\prime}})|$, and then leads to faster convergence. Taking the Problem 2 as an example, it is clear to see that our method converges almost linearly with a factor for both $t = \lceil 1.5s \rceil$ and $t = \lceil 1.2s \rceil$.   Precisely, the convergence rate is ablout $1.0\times 10^{-4}$  for the former case and is about  $1.0\times10^{-2}$ for the latter.  To converge to the minimum residual norm, which is about $1.0\times 10^{-13}$, 
it needs $4$ iterations when we take the size $t$ to  $\lceil 1.5s \rceil$, but 10 iterations  are required for the case $t = \lceil 1.2s \rceil$.
 
Increasing the value of $t$ will lead to a faster convergence rate, however, it also results in a considerable increase in computational cost in each iteration since $t$ represents the number of the right-hand sides in each shifted linear system involved (see (\ref{eq:11-9-1})).

\begin{figure}
\begin{center}
\includegraphics[width=13.0cm]{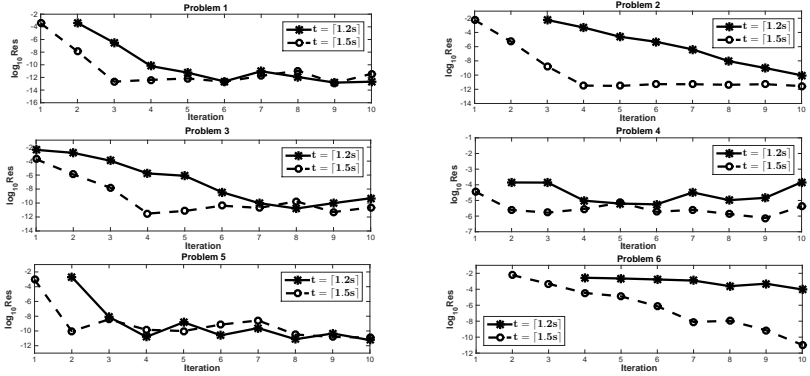}
\caption{The maximum relative residual norms in different iterations.}
\label{Fig:5-1}
\end{center}
\end{figure}

\subsection{Comparisons with BFEAST}
Both RFEAST  and  BFEAST \cite{YCY14} aim  to make the FEAST algorithm applicable for the non-Hermitian problems. The only difference between the two non-Hermitian FEAST methods is the choice of the left subspace. In BFEAST, the left subspace is spanned by $BU$ (see (\ref{eq:11-5-2})), while in our method the left subspace is spanned by a random matrix. The dominant work of both methods is  computing the approximate projection $U$ (see (\ref{eq:2-13})). Thus the computational cost required by both non-Hermitian FEAST algorithms in each iteration is almost the same. Due to this,  in this experiment we compare the numerical performance of the two methods through the accuracy achieved in each iteration. 

\begin{figure}
\begin{center}
\includegraphics[width=13.0cm]{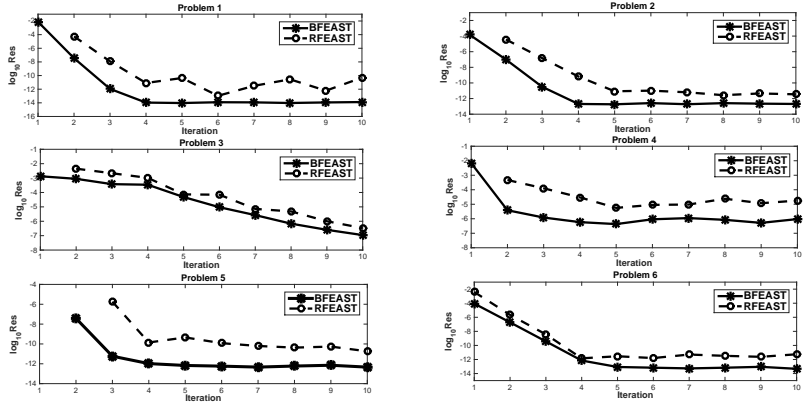}
\caption{The convergence behavior of two non-Hermitian FEAST algorithms.}
\label{Fig:5-2}
\end{center}
\end{figure}

In \cite{YCY14}, the authors presented a technique to select a suitable size of the starting vectors $Y$ for the BFEAST algorithm. To facilitate the comparisons, here we also use this technique to start our method. We depict  \texttt{Res}'s computed by the two test methods from the iteration that the number of filtered eigenpairs attains $s$ for the first
time to the 10th iteration in \textsc{Fig} \ref{Fig:5-2}. As with the previous experiment, the filtering tolerance $\eta$ is also taken to $1.0\times 10^{-2}$.
 The convergence curves of two non-Hermitian FEAST methods are almost parallel, which means the two methods converge with almost the same rate. We have shown in Theorem \ref{Th:11-11-1} that the upper bound for the residual norms of exact eigenpairs  $(\lambda_j, {\bf x}_j)$ are $ \sigma^{(k)} \tau_j  \big( |\tilde{f}(\lambda_{l^{\prime}})|/|\tilde{f}(\lambda_{j})|\big)^k$ in the $k$th iteration (see (\ref{eq:11-11-5})). Recall that our method shares the same right subspace with the BFEAST algorithm. In view of the proof of Theorem \ref{Th:11-11-1}, we are able to establish a similar upper bound  for the BFEAST algorithm simply via replacing the oblique projector $P_{(k)}$ in the expression of $\sigma^{(k)}$ with $Z_{(k)}$, where $Z_{(k)}$ is the oblique projector onto ${\rm span}\{U^{(k)}\}$ and orthogonal to the left subspace $B\mathcal{K}$. More precisely, we can write the upper bound for BFEAST as $ \kappa^{(k)} \tau_j  \big( |\tilde{f}(\lambda_{l^{\prime}})|/|\tilde{f}(\lambda_{j})|\big)^k$, where $ \kappa^{(k)} = \|Z_{(k)}(A-\lambda B)(I_n-Q_{(k)})\|_2$. Therefore, the two methods have almost the same convergence rate, which is $|\tilde{f}(\lambda_{l^{\prime}})|/|\tilde{f}(\lambda_{j})|$ for the eigenpair $(\lambda_j, {\bf x}_j)$.  This can interpret why  two non-Hermitian FEAST algorithms exhibit essentially the same convergence behavior.

On the other hand, it can be seen from \textsc{Fig} \ref{Fig:5-2}  that BFEAST performs better than our method in all test problems in terms of accuracy.  The only difference in the upper bounds between the two methods is the constants  $\kappa^{(k)}$ and $\sigma^{(k)}$, due to the different choices of the left subspaces. The BFEAST algorithm works better than our method possibly because the constant $\kappa^{(k)}$ in the BFEAST algorithm is smaller than $\sigma^{(k)}$  in our method, and therefore the upper bound in BFEAST is sharper than the one in our our method.

\subsection{Comparisons with \textsc{Matlab}'s \texttt{eig} function}
In this experiment, we compare our method with the \textsc{Matlab} built-in function \texttt{eig} in terms of timing. Since the target eigenvalues are the interior ones of non-Hermitian problems, when using \texttt{eig} to compute the eigenvalues inside the regions presented in \textsc{Tab} \ref{Tab:5-1}, we have to first compute all eigenvalues in dense format and then select the target eigenvalues according to their coordinates. 
In our method, we set  the convergence tolerance $\epsilon$ to $1.0\times 10^{-8}$ and 
take the parameter $\texttt{max\_iter}=10$.

%\begin{table}
%\centering
%\caption{Comparison of   \texttt{eig} and our method in terms of timing.}
%\footnotesize{
%%{\newcommand{\q}[1]{\mc{1}{|l|}{\small\tt #1}}
%\noindent
%\begin{tabular}{|c|cc|cc|}
%\hline
%\multirow{2}{*}{No.}   & \multicolumn{2}{c|}{\texttt{eig}}  &  \multicolumn{2}{c|}{our method}  \\ \cline{2-5}
% &   Time (sec.) & $\texttt{Res}$  &  Time (sec.) & $\texttt{Res}$ \\
%\hline
%\hline
%1& 9.80&$4.04\times 10^{-14}$ &5.18 &$1.17\times 10^{-13}$ \\
%2&14.13 &$2.02\times 10^{-12}$ &9.74 & $3.00\times 10^{-12}$ \\
%3&68.40 &$1.75\times10^{-13}$ &  52.21 & $1.50\times10^{-11}$\\
%4 &$2719.36$ & $1.44\times 10^{-7}$ & 112.16&$3.04\times10^{-6}$ \\
%5 &2397.71 & $3.04\times 10^{-12}$& 67.58 &$5.07\times 10^{-12}$ \\
%6 &77653.86 &$9.62\times10^{-13}$ & 486.22& $1.49\times 10^{-12}$ \\ \hline
%\end{tabular}}
%\label{Tab:5-2}
%\end{table}

\begin{table}
\centering
\caption{Comparison of   \texttt{eig} and our method in terms of timing.}
\footnotesize{
%{\newcommand{\q}[1]{\mc{1}{|l|}{\small\tt #1}}
\noindent
\begin{tabular}{|c|c|c|}
\hline
%\multirow{2}{*}{No.}   & \multicolumn{2}{c|}{\texttt{eig}}  &  \multicolumn{2}{c|}{our method}  \\ \cline{2-5}
No. &  \texttt{eig}   &  Our method  \\
\hline
\hline
1& 13.87 &3.22  \\
2&14.13  &10.94  \\
3&68.40  &  35.96 \\
4 &$2719.36$& 180.07 \\
5 &2397.71 & 27.58  \\
6 &77653.86  & 398.82 \\ \hline
\end{tabular}}
\label{Tab:5-2}
\end{table}

The amount of time, which is measured in seconds, required by \texttt{eig} and our RFEAST algorithm is reported in \textsc{Table} \ref{Tab:5-2}. It is clear to see that our method is much faster than the \textsc{Matlab} function \texttt{eig}, although the parallelism offered by our method is not used in the tests. The difference in CPU times is more obvious when the size of test problem grows larger. Therefore, our method is much more efficient than the \textsc{Matlab}'s \texttt{eig} function.

\section{Conclusions}  In this work, we have developed a new scheme to make the FEAST algorithm  applicable for the non-Hermitian problems. The key step  is that  the left subspace used to extract the desired eigenpairs in our method is spanned by a random matrix. Theoretical analysis shown that our method can deal with the non-Hermitian cases. The resulting method retains the feature of parallelism offered by the original FEAST algorithm and does not increase the computational cost. The convergence properties of our new method were also investigated. Numerical experiments were reported to demonstrate the numerical performance of our new method and to validated the convergence analysis.

\section{Acknowledgment} I would like to thank Professor Raymond H. Chan, my thesis advisor, at The Chinese University of Hong Kong and Professor Man-Chung Yeung at University of Wyoming for their help and fruitful discussions in the preparation of this paper.

\end{document}